\newtheorem{Thm}{Theorem} 
\newaliascnt{Lem}{Thm}
\newtheorem{Lem}[Lem]{Lemma}
\newaliascnt{Prop}{Thm}
\newaliascnt{Cor}{Thm}
\newtheorem{Cor}[Cor]{Corollary}
\newaliascnt{Con}{Thm}
\newtheorem{Con}[Con]{Conjecture}
\theoremstyle{definition}
\newaliascnt{Def}{Thm}
\newaliascnt{Ex}{Thm}
\numberwithin{equation}{section}
\renewcommand{\phi}{\varphi}
\newcommand{\C}{\operatorname{C}}
\newcommand{\N}{\operatorname{N}}
\newcommand{\Z}{\operatorname{Z}}
\newcommand{\Aut}{\operatorname{Aut}}
\newcommand{\Out}{\operatorname{Out}}
\newcommand{\pcore}{\operatorname{O}}
\newcommand{\PSL}{\operatorname{PSL}}
\newcommand{\Irr}{\operatorname{Irr}}
\mathchardef\ordinarycolon\mathcode`\:  
\title{Solution of Brauer's $k(B)$-Conjecture for $\pi$-blocks of $\pi$-separable groups}
\author{Benjamin Sambale\footnote{Fachbereich Mathematik, TU Kaiserslautern, 67653 Kaiserslautern, Germany, 
\href{mailto:sambale@mathematik.uni-kl.de}{sambale@mathematik.uni-kl.de}}}
\date{\today}
\begin{document}
\frenchspacing
\maketitle
\begin{abstract}\noindent
Answering a question of Pálfy and Pyber, we first prove the following extension of the $k(GV)$-Problem: Let $G$ be a finite group and let $A$ be a coprime automorphism group of $G$. Then the number of conjugacy classes of the semidirect product $G\rtimes A$ is at most $|G|$. As a consequence we verify Brauer's $k(B)$-Conjecture for $\pi$-blocks of $\pi$-separable groups which was proposed by Y.~Liu. This generalizes the corresponding result for blocks of $p$-solvable groups. We also discuss equality in Brauer's Conjecture. On the other hand, we construct a counterexample to a version of Olsson's Conjecture for $\pi$-blocks which was also introduced by Liu.
\end{abstract}

\textbf{Keywords:} $\pi$-blocks, Brauer's $k(B)$-Conjecture, $k(GV)$-Problem\\
\textbf{AMS classification:} 20C15,	20D20

\section{Introduction}

One of the oldest outstanding problems in the representation theory of finite groups is \emph{Brauer's $k(B)$-Conjecture}~\cite{Brauer46}. It asserts that the number $k(B)$ of ordinary irreducible characters in a $p$-block $B$ of a finite group $G$ is bounded by the order of a defect group of $B$. For $p$-solvable groups $G$, Nagao~\cite{NagaoKGV} has reduced Brauer's $k(B)$-Conjecture to the so-called \emph{$k(GV)$-Problem}: If a $p'$-group $G$ acts faithfully and irreducibly on a finite vector space $V$ in characteristic $p$, then the number $k(GV)$ of conjugacy classes of the semidirect product $G\ltimes V$ is at most $|V|$. 
Eventually, the $k(GV)$-Problem has been solved in 2004 by the combined effort of several mathematicians invoking the classification of the finite simple groups. A complete proof appeared in \cite{kGVproblem}. 

Brauer himself already tried to replace the prime $p$ in his theory by a set of primes $\pi$. Different approaches have been given later by Iizuka, Isaacs, Reynolds and others (see the references in \cite{Slattery}). Finally, Slattery developed in a series of papers~\cite{Slattery,Slattery2,Slattery3} a nice theory of $\pi$-blocks in $\pi$-separable groups (precise definitions are given in the third section below). This theory was later complemented by Laradji~\cite{Laradji,Laradji2} and Y. Zhu~\cite{Yixin2}. 
The success of this approach is emphasized by the verification of \emph{Brauer's Height Zero Conjecture} and the \emph{Alperin--McKay Conjecture} for $\pi$-blocks of $\pi$-separable groups by Manz--Staszewski~\cite[Theorem~3.3]{ManzStaszewski} and Wolf~\cite[Theorem~2.2]{Wolf} respectively.
In 2011, Y. Liu~\cite{Yanjun} put forward a variant of Brauer's $k(B)$-Conjecture for $\pi$-blocks in $\pi$-separable groups. Since $\{p\}$-separable groups are $p$-solvable and $\{p\}$-blocks are $p$-blocks, this generalizes the results mentioned in the first paragraph. 
Liu verified his conjecture in the special case where $G$ has a nilpotent normal Hall $\pi$-subgroup.
The aim of the present paper is to give a full proof of Brauer's $k(B)$-Conjecture for $\pi$-blocks in $\pi$-separable groups (see \autoref{main} below). In order to do so, we need to solve a generalization of the $k(GV)$-Problem (see \autoref{kGV} below). 
In this way we answer a question raised by Pálfy and Pyber at the end of \cite{PP} (see also \cite{INM}). The proof relies on the classification of the finite simple groups. 
Motivated by Robinson's theorem~\cite{Robinsonnonabel} for blocks of $p$-solvable groups, we also show that equality in Brauer's Conjecture can only occur for $\pi$-blocks with abelian defect groups.
Finally, we construct a counterexample to a version of \emph{Olsson's Conjecture} which was also proposed by Liu~\cite{Yanjun}. 

\section{A generalized $k(GV)$-Problem}

In the following we use the well-known formula $k(G)\le k(N)k(G/N)$ where $N\unlhd G$ (see \cite[Lemma~1]{NagaoKGV}).
\phantom{bla}

\begin{Thm}\label{kGV}
Let $G$ be a finite group, and let $A\le\Aut(G)$ such that $(|G|,|A|)=1$. Then $k(G\rtimes A)\le |G|$.
\end{Thm}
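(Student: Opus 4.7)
The plan is to reduce to the classical $k(GV)$-theorem of \cite{kGVproblem} by induction on $|G|$. The cases $G=1$ and $A=1$ are trivial, so we may assume both are nontrivial.

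The driving reduction uses the cited inequality: whenever $N$ is a nontrivial proper $A$-invariant normal subgroup of $G$, $N$ is also normal in $H:=G\rtimes A$, and we obtain
$$k(G\rtimes A)\le k(N)\cdot k\bigl((G/N)\rtimes A\bigr)\le|N|\cdot k\bigl((G/N)\rtimes A\bigr).$$
If $A$ acts faithfully on $G/N$, the inductive hypothesis applied to the pair $(G/N,A)$ gives $k((G/N)\rtimes A)\le|G/N|$ and the bound $|G|$ follows. The subtle point is when $K:=C_A(G/N)\ne 1$: a standard coprime-action argument (an element centralizing both $N$ and $G/N$ centralizes $G$, hence is trivial) shows that $K$ acts faithfully on $N$, so the pair $(N,K)$ again satisfies the hypotheses with $|N|<|G|$. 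Choosing $N$ carefully, e.g.\ as a minimal $A$-invariant normal subgroup, and performing a simultaneous induction on $|G|+|A|$ should allow us to dispose of this subcase.

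Iterating this reduction drives us into the case where $G$ has no nontrivial proper $A$-invariant normal subgroup, i.e.\ $G$ is characteristically simple, so $G\cong T^k$ for some simple group $T$. If $T\cong\ZZ/p$, then $G$ is an elementary abelian $p$-group on which $A$ acts faithfully, and after decomposing into $A$-homogeneous components necessarily irreducibly, with $p\nmid|A|$: this is precisely the $k(GV)$ setting and yields $k(G\rtimes A)\le|G|$. If $T$ is nonabelian simple, then $(|A|,|T^k|)=1$ forces $A\cap\Inn(T^k)=1$, so $A$ embeds into $\Out(T^k)\cong\Out(T)\wr S_k$; by Schreier's conjecture ($\Out(T)$ solvable, a consequence of CFSG) $A$ is itself solvable, and the coprimality heavily restricts its structure.

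The main obstacles are twofold. First, handling $K\ne 1$ in the inductive reduction is delicate: a naive iteration of $k(\cdot)\le k(K)k(\cdot/K)$ yields only $k(G\rtimes A)\le|G|\cdot k(K)$, which is too weak, so one must either arrange $K=1$ by a good choice of $N$, or absorb the factor $k(K)$ via a refined coupled induction. Second, the nonabelian characteristically simple case requires direct verification of the bound using detailed CFSG-based information about $\Out(T)$ and the wreath-product action on the $k$ factors; this is where the heaviest classification-dependent work lies. Once these two points are settled, the remainder of the argument is bookkeeping.
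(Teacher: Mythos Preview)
Your overall strategy matches the paper's, but the first ``obstacle'' you flag is not actually an obstacle---you are using the wrong normal subgroup in the submultiplicativity inequality. Instead of splitting along $N\unlhd G\rtimes A$, split along $NK\unlhd G\rtimes A$, where $K=\C_A(G/N)$. Since $K$ acts trivially on $G/N$, one checks that $G$ normalizes $NK$ (for $g\in G$ and $b\in K$ one has $bgb^{-1}\in gN$, hence $g^{-1}bg\in Nb\subseteq NK$), and $K\unlhd A$ gives $NK\unlhd G\rtimes A$. Now the quotient is $(G/N)\rtimes(A/K)$ with $A/K$ acting faithfully, so induction gives $k((G/N)\rtimes(A/K))\le|G/N|$; and $K$ acts faithfully on $N$ (your coprime argument), so induction gives $k(NK)\le|N|$. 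Multiplying finishes the reduction in one step---no coupled induction, no careful choice of $N$, no extra factor of $k(K)$.

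For the nonabelian characteristically simple case $G\cong S^n$, the paper does not need anything delicate. Two crude estimates suffice: $k(S)\le\sqrt{|S|}$ for nonabelian simple $S$ (Guralnick--Robinson), and the coprime part of $|\Out(S)|$ is at most the field-automorphism order $f\le\log_2|S|$ (read off from the Atlas description $|\Out(S)|=dfg$ with $d,g$ dividing $|S|$). For $n=1$ this already gives $k(GA)\le k(G)|A|\le\sqrt{|G|}\log_2|G|\le|G|$. For $n>1$, the kernel $B$ of the permutation action satisfies $k(GB)\le(\sqrt{|S|}\log_2|S|)^n$ by the same estimates, and $A/B\le\mathfrak S_n$ has odd order (Feit--Thompson), so $|A/B|\le\sqrt{3}^{\,n}$ by Dixon's bound on odd-order permutation groups; combining gives $k(GA)\le(\sqrt{3|S|}\log_2|S|)^n\le|S|^n$ once $|S|\ge 168$, with $|S|=60$ handled separately. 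So the second obstacle is also lighter than you anticipate.
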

\begin{proof}
We argue by induction on $|G|$. The case $G=1$ is trivial and we may assume that $G\ne 1$. Suppose first that $G$ contains an $A$-invariant normal subgroup $N\unlhd G$ such that $1<N<G$. Let $B:=\C_A(G/N)\unlhd A$. Then $B$ acts faithfully on $N$ and by induction we obtain $k(NB)\le|N|$. 
Similarly we have $k((G/N)\rtimes(A/B))\le|G/N|$. It follows that
\[k(GA)\le k(NB)k(GA/NB)\le|N|k((G/N)(A/B))\le|N||G/N|=|G|.\]

Hence, we may assume that $G$ has no proper non-trivial $A$-invariant normal subgroups. In particular, $G$ is characteristically simple, i.\,e. $G=S_1\times\ldots\times S_n$ with simple groups $S:=S_1\cong\ldots\cong S_n$. If $S$ has prime order, then $G$ is elementary abelian and the claim follows from the solution of the $k(GV)$-Problem (see \cite{kGVproblem}). Therefore, we assume in the following that $S$ is non-abelian. 

We discuss the case $n=1$ (that is $G$ is simple) first. Since $(|A|,|G|)=1$, $A$ is isomorphic to a subgroup of $\Out(G)$.
If $G$ is an alternating group or a sporadic group, then $\lvert\Out(G)\rvert$ divides $4$ and $A=1$ as is well-known. In this case the claim follows since $k(GA)=k(G)\le|G|$. Hence, we may assume that $S$ is a group of Lie type over a field of size $p^f$ for a prime $p$.
According to the Atlas~\cite[Table~5]{Atlas}, the order of $\Out(G)$ has the form $dfg$.
Here $d$ divides the order of the Schur multiplier of $G$ and therefore every prime divisor of $d$ divides $|G|$. Moreover, $g\mid 6$ and in all cases $g$ divides $|G|$. Consequently, $|A|\le f\le\log_2p^f\le\log_2|G|$. On the other hand, \cite[Theorem~9]{RobRob} shows that $k(G)\le\sqrt{|G|}$. Altogether, we obtain 
\[k(GA)\le k(G)|A|\le\sqrt{|G|}\log_2|G|\le|G|\] 
(note that $|G|\ge |\mathfrak{A}_5|=60$ where $\mathfrak{A}_5$ denotes the alternating group of degree $5$).

It remains to handle the case $n>1$. 
Here $\Aut(G)\cong\Aut(S)\wr \mathfrak{S}_n$ where $\mathfrak{S}_n$ is the symmetric group of degree $n$. 
Let $B:=\N_A(S_1)\cap\ldots\cap\N_A(S_n)\unlhd A$. Then $B\le\Out(S_1)\times\ldots\times\Out(S_n)$ and the arguments from the $n=1$ case yield 
\begin{equation}\label{eq}
k(GB)\le k(G)|B|=k(S)^n|B|\le\bigl(\sqrt{|S|}\log_2|S|\bigr)^n.
\end{equation}
By Feit--Thompson, $|G|$ has even order and $A/B\le \mathfrak{S}_n$ has odd order since $(|G|,|A|)=1$. A theorem of Dixon~\cite{DixonTour} implies that $|A/B|\le \sqrt{3}^n$. 
If $|G|=60$, then $G\cong \mathfrak{A}_5$, $B=1$ and 
\[k(GA)\le k(\mathfrak{A}_5)^n|A|\le (5\sqrt{3})^n\le 60^n=|G|.\] 
Therefore, we may assume that $|G|\ge\lvert\PSL(3,2)\rvert=168$. Then \eqref{eq} gives
\[k(GA)\le k(GB)|A/B|\le(\sqrt{3|S|}\log_2|S|)^n\le |S|^n=|G|.\qedhere\]
\end{proof}

\section{$\pi$-Blocks of $\pi$-separable groups}

Let $\pi$ be a set of primes. Recall that a finite group $G$ is called \emph{$\pi$-separable} if $G$ has a normal series 
\[1=N_0\unlhd\ldots\unlhd N_k=G\] 
such that each quotient $N_i/N_{i-1}$ is a $\pi$-group or a $\pi'$-group. 
The following consequence of \autoref{kGV} generalizes and proves the conjecture made in \cite{INM}.

\begin{Cor}\label{cor}
For every $\pi$-separable group $G$ we have $k(G/\pcore_{\pi'}(G))\le|G|_{\pi}$. 
\end{Cor}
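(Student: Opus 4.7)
The plan is to prove $k(G/\pcore_{\pi'}(G))\le|G|_\pi$ by induction on $|G|$. First, I would reduce to the case $\pcore_{\pi'}(G)=1$ by passing to $\bar G:=G/\pcore_{\pi'}(G)$: this quotient is again $\pi$-separable with trivial $\pi'$-core, and since $\pcore_{\pi'}(G)$ is a $\pi'$-group we have $|\bar G|_\pi=|G|_\pi$. It then suffices to show $k(G)\le|G|_\pi$ under the standing assumption $\pcore_{\pi'}(G)=1$; the case $G=1$ is trivial, so assume $G\ne 1$.

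Next, I would produce a normal subgroup to which \autoref{kGV} applies. The Hall--Higman-style lemma for $\pi$-separable groups gives $P:=\pcore_\pi(G)\ne 1$ and $\C_G(P)\le P$. Put $H:=\pcore_{\pi,\pi'}(G)$, so that $H/P=\pcore_{\pi'}(G/P)$ is a $\pi'$-group. By Schur--Zassenhaus there exists a $\pi'$-subgroup $A\le H$ with $H=P\rtimes A$, and since $A\cap\C_G(P)\le A\cap P=1$, the group $A$ acts faithfully on $P$. As $(|A|,|P|)=1$, \autoref{kGV} yields $k(H)=k(P\rtimes A)\le|P|$.

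Finally, I would apply induction to $G/H$. One checks $\pcore_{\pi'}(G/H)=1$: any normal $\pi'$-subgroup $K/H$ of $G/H$ would give a normal subgroup $K/P$ of $G/P$ that is a $\pi'$-by-$\pi'$ extension and hence a $\pi'$-group, contradicting the maximality of $H/P=\pcore_{\pi'}(G/P)$. Since $P\ne 1$ forces $H\ne 1$ and thus $|G/H|<|G|$, the induction hypothesis gives $k(G/H)\le|G/H|_\pi$. Combining with the standard inequality $k(G)\le k(H)\,k(G/H)$ recalled at the start of Section~2, and using $|H|_\pi=|P|$, we obtain
\[k(G)\le|P|\cdot|G/H|_\pi=|H|_\pi\cdot|G/H|_\pi=|G|_\pi,\]
as required.

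The entire difficulty of the corollary is concentrated in \autoref{kGV} itself; granted that theorem, no real obstacle remains. The only point that needs care is choosing $H=\pcore_{\pi,\pi'}(G)$ rather than $P$ alone, so that on the one hand $A$ supplies a faithful coprime automorphism group of $P$ (making \autoref{kGV} applicable), and on the other hand $G/H$ still has trivial $\pi'$-core so that induction runs.
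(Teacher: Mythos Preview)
Your proof is correct and follows essentially the same route as the paper: reduce to $\pcore_{\pi'}(G)=1$, set $P=\pcore_\pi(G)$ and $H=\pcore_{\pi,\pi'}(G)$, apply Schur--Zassenhaus and \autoref{kGV} to get $k(H)\le|P|$, then use $k(G)\le k(H)k(G/H)$ together with induction on the quotient. Your write-up is in fact more careful than the paper's, which leaves the faithfulness of the $A$-action on $P$ and the vanishing of $\pcore_{\pi'}(G/H)$ implicit; you spell both out correctly via the Hall--Higman lemma and the maximality of $H/P=\pcore_{\pi'}(G/P)$.
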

\begin{proof}
We may assume that $\pcore_{\pi'}(G)=1$ and $N:=\pcore_{\pi}(G)\ne 1$. We argue by induction on $|N|$. By the Schur--Zassenhaus Theorem, $N$ has a complement in $\pcore_{\pi\pi'}(G)$ and \autoref{kGV} implies $k(\pcore_{\pi\pi'}(G))\le|N|$. Now induction yields
\[k(G)\le k(\pcore_{\pi\pi'}(G))k(G/\pcore_{\pi\pi'}(G))\le|N||G/N|_\pi=|G|_\pi.\qedhere\]
\end{proof}

A \emph{$\pi$-block} of a $\pi$-separable group $G$ is a minimal non-empty subset $B\subseteq\Irr(B)$ such that $B$ is a union of $p$-blocks for every $p\in\pi$ (see \cite[Definition~1.12 and Theorem~2.15]{Slattery}). In particular, the $\{p\}$-blocks of $G$ are the $p$-blocks of $G$. In accordance with the notation for $p$-blocks we set $k(B):=|B|$ for every $\pi$-block $B$.

A \emph{defect group} $D$ of a $\pi$-block $B$ of $G$ is defined inductively as follows. Let $\chi\in B$ and let $\lambda\in\Irr(\pcore_{\pi'}(G))$ be a constituent of the restriction $\chi_{\pcore_{\pi'}(G)}$ (we say that $B$ \emph{lies over} $\lambda$). Let $G_\lambda$ be the inertial group of $\lambda$ in $G$. If $G_\lambda=G$, then $D$ is a Hall $\pi$-subgroup of $G$ (such subgroups always exist in $\pi$-separable groups). Otherwise we take a $\pi$-block $b$ of $G_\lambda$ lying over $\lambda$. Then $D$ is a defect group of $b$ up to $G$-conjugation (see \cite[Definition~2.2]{Slattery2}). It was shown in \cite[Theorem~2.1]{Slattery2} that this definition agrees with the usual definition for $p$-blocks.

The following theorem verifies Brauer's $k(B)$-Conjecture for $\pi$-blocks of $\pi$-separable groups (see \cite{Yanjun}).

\begin{Thm}\label{main}
Let $B$ be a $\pi$-block of a $\pi$-separable group $G$ with defect group $D$. Then $k(B)\le|D|$. 
\end{Thm}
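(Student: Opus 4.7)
The plan is to adapt Nagao's classical reduction of Brauer's Conjecture for $p$-solvable groups, replacing his appeal to the $k(GV)$-Problem by \autoref{kGV} and its consequence \autoref{cor}. I would induct on $|G|$. Fix $\chi\in B$ and let $\lambda\in\Irr(\pcore_{\pi'}(G))$ be a constituent of $\chi_{\pcore_{\pi'}(G)}$, so that $B$ lies over $\lambda$ in the sense of the paragraph preceding the theorem, and split according to whether or not $\lambda$ is $G$-invariant.

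First I would handle the case $G_\lambda<G$. The Fong--Reynolds-type Clifford correspondence for $\pi$-blocks of $\pi$-separable groups, developed by Slattery and refined by Laradji, supplies a unique $\pi$-block $b$ of $G_\lambda$ lying over $\lambda$ that corresponds to $B$, with $k(b)=k(B)$; and the inductive definition of defect recalled in the excerpt forces $b$ to have the same defect group $D$ up to $G$-conjugation. Since $|G_\lambda|<|G|$, applying the induction hypothesis to $b$ in $G_\lambda$ immediately gives $k(B)=k(b)\le|D|$.

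In the remaining case $\lambda$ is $G$-invariant and the definition forces $D$ to be a Hall $\pi$-subgroup of $G$, so $|D|=|G|_\pi$. Writing $N:=\pcore_{\pi'}(G)$, every character in $B$ lies over $\lambda$ (each $p$-block comprising $B$ is a single Clifford orbit of constituents, and the orbit of $\lambda$ is a singleton), so $B\subseteq\Irr(G\mid\lambda)$. Projective Clifford theory then sets up a bijection between $\Irr(G\mid\lambda)$ and the irreducible $\alpha$-projective characters of $G/N$ for a suitable cocycle $\alpha$, whence $|\Irr(G\mid\lambda)|\le k(G/N)$. Now \autoref{cor} applied to $G$ gives $k(G/N)\le|G|_\pi$, and chaining the estimates yields
\[k(B)\le|\Irr(G\mid\lambda)|\le k(G/N)\le|G|_\pi=|D|.\]

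All the genuine difficulty is concentrated in \autoref{kGV} (and hence \autoref{cor}); the remaining ingredients are standard. The only point requiring some care is quoting the $\pi$-block Fong--Reynolds correspondence in a form that preserves both $k(\cdot)$ and the defect group, which is exactly what Slattery's and Laradji's theorems provide.
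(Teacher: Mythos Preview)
Your proposal is correct and follows essentially the same route as the paper: reduce via the $\pi$-block Fong--Reynolds correspondence to the case where $\lambda$ is $G$-invariant, then bound $k(B)\le|\Irr(G\mid\lambda)|\le k(G/N)$ (the paper cites \cite[Problem~11.10]{Isaacs} for this projective Clifford theory step) and finish with \autoref{cor}. The only cosmetic difference is that you write out the induction and the non-invariant case explicitly, whereas the paper simply says ``we may assume $\lambda$ is $G$-stable'' after invoking Fong--Reynolds.
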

\begin{proof}
We mimic Nagao's reduction~\cite{NagaoKGV} of Brauer's $k(B)$-Conjecture for $p$-solvable groups.
Let $N:=\pcore_{\pi'}(G)$, and let $\lambda\in\Irr(N)$ lying under $B$.
By \cite[Theorem~2.10]{Slattery} and \cite[Corollary~2.8]{Slattery2}, the Fong--Reynolds Theorem holds for $\pi$-blocks. Hence, we may assume that $\lambda$ is $G$-stable and $B$ is the set of irreducible characters of $G$ lying over $\lambda$ (see \cite[Theorem~2.8]{Slattery}). Then $D$ is a Hall $\pi$-subgroup of $G$ by the definition of defect groups. By \cite[Problem~11.10]{Isaacs} and \autoref{cor}, it follows that $k(B)\le k(G/N)\le|G|_\pi=|D|$. 
\end{proof}

In the situation of \autoref{kGV} it is known that $GA$ contains only one $\pi$-block where $\pi$ is the set of prime divisors of $|G|$ (see \cite[Corollary~2.9]{Slattery}). Thus, in the proof of \autoref{main} one really needs to full strength of \autoref{kGV}.

Liu~\cite{Yanjun} has also proposed the following conjecture (cf. \cite[Definition~2.13]{Slattery2}):

\begin{Con}[Olsson's Conjecture for $\pi$-blocks]
Let $B$ be a $\pi$-block of a $\pi$-separable group $G$ with defect group $D$. Let $k_0(B)$ be the number of characters $\chi\in B$ such that $\chi(1)_\pi |D|=|G|_\pi$. Then $k_0(B)\le|D:D'|$.
\end{Con}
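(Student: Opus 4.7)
The natural strategy is to imitate the proof of \autoref{main}: by the Fong--Reynolds theorem for $\pi$-blocks, first reduce to the case where the character $\lambda\in\Irr(\pcore_{\pi'}(G))$ lying under $B$ is $G$-stable, so that $D$ is a Hall $\pi$-subgroup of $G$ and $B=\Irr(G\mid\lambda)$. In this situation $|G|_\pi=|D|$, and the height-zero condition $\chi(1)_\pi|D|=|G|_\pi$ becomes $\chi(1)_\pi=1$, that is, $\chi(1)$ is a $\pi'$-number. Since $\lambda(1)$ divides $|\pcore_{\pi'}(G)|$ it is automatically a $\pi'$-number, and a Clifford-theoretic analysis in the spirit of \cite[Problem~11.10]{Isaacs} relates $k_0(B)$ to the number of $\pi'$-degree irreducible characters of the quotient $G/\pcore_{\pi'}(G)$ (twisted by a suitable $2$-cocycle when $\lambda$ does not extend).

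The conjecture thus reduces to the following statement: for a $\pi$-separable group $H$ with $\pcore_{\pi'}(H)=1$ and Hall $\pi$-subgroup $D$, the number of $\pi'$-degree irreducible characters of $H$ is at most $|D:D'|$. I would attempt this by induction on $|H|$, following the pattern of \autoref{cor}: use Schur--Zassenhaus to split off $\pcore_\pi(H)$, analyze the coprime action of a $\pi'$-complement on $\pcore_\pi(H)/\Phi(\pcore_\pi(H))$, and then try to invoke an analogue of the Gluck--Wolf theorem matching $\pi'$-degree characters with linear characters of a quotient controlling $D/D'$. For $\pi=\{p\}$ this is exactly the machinery used in the classical Olsson's Conjecture for $p$-solvable groups.

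The main obstacle, and indeed the reason this conjecture fails, is that $|D:D'|$ behaves very differently once $\pi$ contains more than one prime. For a $p$-group the abelianization is always substantial and is tied via height-zero theory to the count of $p'$-degree characters. For a Hall $\pi$-subgroup, however, $D$ can be close to perfect --- take for instance $\SL(2,3)$ as a Hall $\{2,3\}$-subgroup, where $|D:D'|=3$ --- so the bound $|D:D'|$ collapses, while the outer $\pi$-separable group $H$ can still support many $\pi'$-degree characters lying over a fixed $G$-stable $\pi'$-character. This is precisely where I would hunt for a counterexample: a semidirect product $V\rtimes H$ with $V$ a faithful irreducible $H$-module in some coprime characteristic $p\in\pi'$ and $H$ a small $\pi$-group of the type just indicated, which fits the counterexample announced in the abstract.
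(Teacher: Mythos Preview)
You correctly diagnose that the conjecture is \emph{false}, and your intuition---that once $\pi$ contains several primes the defect group $D$ can be (nearly) perfect while the block still carries many height-zero characters---is exactly the mechanism the paper exploits. However, your concrete counterexample candidate is aimed in the wrong direction and, as stated, does not work.

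You propose $G=V\rtimes H$ with $V$ a faithful irreducible $\FF_pH$-module, $p\in\pi'$, and $H$ a small almost-perfect $\pi$-group such as $\SL(2,3)$. In this set-up $\pcore_{\pi'}(G)=V$ and $D=H$. For the principal $\pi$-block (over $\lambda=1_V$) the height-zero characters are precisely the $\pi'$-degree characters of $H$; for $H=\SL(2,3)$ the degrees are $1,1,1,2,2,2,3$, so $k_0(B)=3=|D:D'|$ and no violation occurs. Getting a violation from a non-principal block would require a $G$-stable $\lambda\ne 1$ together with a projective-character count, and there is no reason to expect this to exceed $|D:D'|$ either. The construction also keeps $D$ solvable, so $|D:D'|$ never collapses to $1$.

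The paper's counterexample reverses the roles of $\pi$ and $\pi'$ in the semidirect product and pushes your ``close to perfect'' idea to its extreme: take $D$ \emph{simple}. Concretely $G=\PSL(2,2^5)\rtimes C_5$ with $\pi=\{2,3,11,31\}$, so that $D=\pcore_\pi(G)=\PSL(2,2^5)$ and $\pcore_{\pi'}(G)=1$. By Slattery's theory $G$ has a unique $\pi$-block $B$, and the five linear characters of $G/D\cong C_5$ all lie in $B$ with $\pi'$-degree $1$. Since $D$ is simple, $|D:D'|=1<5\le k_0(B)$. The point you missed is that putting the $\pi'$-part on \emph{top} (as a coprime automorphism group of a simple $\pi$-group) manufactures height-zero characters for free---one for each linear character of the $\pi'$-quotient---while simultaneously driving $|D:D'|$ down to $1$.
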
 

This conjecture however is false. A counterexample is given by $G=\PSL(2,2^5)\rtimes C_5$ where $C_5$ acts as a field automorphism on $\PSL(2,2^5)$. Here $|G|=2^5\cdot 3\cdot 5\cdot 11\cdot 31$ and we choose $\pi=\{2,3,11,31\}$. Then $\pcore_{\pi}(G)=\PSL(2,2^5)$ and \cite[Corollary~2.9]{Slattery} implies that $G$ has only one $\pi$-block $B$ which must contain the five linear characters of $G$. Moreover, $B$ has defect group $D=\pcore_{\pi}(G)$ by \cite[Lemma~2.3]{Slattery2}. Hence, $k_0(B)\ge 5>1=|D:D'|$ since $D$ is simple.

\section{Abelian defect groups}

In this section we prove that the equality $k(B)=|D|$ in \autoref{main} can only hold if $D$ is abelian. We begin with Gallagher's observation~\cite{Gallagher} that $k(G)=k(N)k(G/N)$ for $N\unlhd G$ implies $G=\C_G(x)N$ for all $x\in N$. 
Next we analyze equality in our three results above.

\begin{Lem}\label{lemnonabel}
Let $G$ be a finite group and $A\le\Aut(G)$ such that $(|G|,|A|)=1$. If $k(G\rtimes A)=|G|$, then $G$ is abelian.
\end{Lem}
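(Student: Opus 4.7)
The plan is to proceed by induction on $|G|$, mirroring the case analysis in the proof of \autoref{kGV} and tracking when each inequality can be an equality. The base $G=1$ is trivial. If $G$ has no proper non-trivial $A$-invariant normal subgroup then $G$ is characteristically $A$-simple, so either $G$ is elementary abelian (done) or $G\cong S^n$ with $S$ non-abelian simple; in the latter case the proof of \autoref{kGV} delivers the strict bound $k(GA)<|G|$, contradicting our hypothesis. Otherwise, pick a minimal proper $A$-invariant normal subgroup $N\unlhd G$ and set $B:=\C_A(G/N)$. The chain $k(GA)\le k(NB)\cdot k(GA/NB)\le|N|\cdot|G/N|=|G|$ must then be a chain of equalities.

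Note that $B$ acts faithfully on $N$: a $b\in B$ centralizing both $G/N$ and $N$ centralizes $G$ by the standard coprime commutator argument, and so is trivial. Applying the inductive hypothesis to $(N,B)$ therefore gives $N$ abelian from $k(NB)=|N|$, and applying it to $(G/N,A/B)$ via $GA/NB\cong(G/N)\rtimes(A/B)$ (with $A/B$ acting faithfully by construction) gives $G/N$ abelian. Gallagher's observation as recalled in the introduction now yields $\C_{GA}(x)\cdot NB=GA$ for every $x\in NB$. Specializing to $x\in N$ and using $N\le\C_{GA}(x)$ (as $N$ is abelian) this collapses to $GA=\C_{GA}(x)\cdot B$. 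A direct order count, writing $A_x:=\{a\in A:a(x)\in x^G\}$ (a subgroup of $A$) and $|\C_{GA}(x)|=|\C_G(x)|\cdot|A_x|$, yields $|A_x|/|A|=|x^G|/|x^B|$, where $x^G$ and $x^B$ denote the $G$- and $B$-orbits of $x$ in $N$. Thus $|x^G|$ divides $|x^B|$ as positive integers; but $|x^G|$ divides $|G|$, $|x^B|$ divides $|B|$ and $(|G|,|B|)=1$, forcing $|x^G|=1$. Hence $N\le\Z(G)$, and combined with $G/N$ abelian this makes $G$ nilpotent of class at most~$2$.

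The step I expect to be the main obstacle is closing off the resulting class-$2$ case. Running the same Gallagher-plus-induction argument on every proper $A$-invariant normal subgroup $M$ of a minimal counterexample forces $M$ abelian (else induction gives $k(MB_M)<|M|$ and hence $k(GA)<|G|$) and then forces $M\le\Z(G)$; consequently $G/\Z(G)$ admits no proper non-trivial $A$-invariant subgroup, so it is characteristically $A$-simple and abelian, i.e.\ an irreducible elementary abelian $\FF_p[A]$-module. I plan to exclude this residual configuration by applying Gallagher's identity at the remaining elements $x\in NB$, namely $x=b\in B$ and mixed elements $x=zb\in\Z(G)B$: the same order-counting then yields that the $A$- and $B$-orbits on $\Z(G)$ coincide and that $A$ does not fuse $B$-conjugacy classes, and I expect the combination of these constraints with the coprimality $(|G|,|A|)=1$ and the irreducibility of $G/\Z(G)$ to force the commutator pairing $G/\Z(G)\times G/\Z(G)\to\Z(G)$ to vanish, contradicting $G$ being non-abelian.
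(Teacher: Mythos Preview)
Your argument tracks the paper's closely through the reduction to $G$ nilpotent of class at most~$2$: the induction setup, the handling of the characteristically simple case via the strict inequalities in the proof of \autoref{kGV}, and the use of Gallagher's equality criterion to force $N\le\Z(G)$ all agree with the paper. (Your order count for $N\le\Z(G)$ is just a spelled-out version of the paper's one-liner ``Hence $G\le\C_{GA}(x)$'', which follows since $|GA:\C_{GA}(x)|=|B:\C_B(x)|$ is coprime to $|G|$ while $|x^G|$ divides it.)

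The genuine gap is your final paragraph. You correctly isolate the residual case---a non-abelian $p$-group $G$ of class~$2$ with $G/\Z(G)$ an irreducible $\FF_p[A]$-module---but you do not prove it is impossible: you only say you \emph{expect} the Gallagher constraints at elements of $B$ and $\Z(G)B$ to force the commutator pairing to vanish, without indicating a mechanism. This is not a routine detail; it is essentially the content of Robinson's \cite[Theorem~1']{Robinsonnonabel}, which asserts that a $p$-solvable group $H$ with $\pcore_{p'}(H)=1$ and non-abelian Sylow $p$-subgroup $P$ satisfies $k(H)<|P|$. The paper does not attempt an ad hoc argument here: once $G$ is nilpotent it notes that each Sylow subgroup is characteristic, hence $A$-invariant, reduces to the case where $G$ is a $p$-group, and then invokes Robinson's theorem for $H=G\rtimes A$ (where indeed $\pcore_{p'}(H)=1$ because $A$ acts faithfully). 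Your proposed constraints do buy something---for instance, $B=\C_A(G/\Z(G))$ acts trivially on $G'$ because commutators in a class-$2$ group depend only on cosets modulo the centre, and combined with your observation that the $A$- and $B$-orbits on $\Z(G)$ agree this forces $A$ to act trivially on $G'$---but passing from ``non-zero $A$-invariant alternating form on an irreducible coprime $A$-module'' to a contradiction with $k(GA)=|G|$ is exactly the hard step, and you have not supplied it.
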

\begin{proof}
We assume that $k(GA)=|G|$ and argue by induction on $|G|$. Suppose first that there is an $A$-invariant normal subgroup $N\unlhd G$ such that $1<N<G$. As in the proof of \autoref{kGV} we set $B:=\C_A(G/N)$ and obtain
$k(GA)=k(NB)k(GA/NB)$. By induction, $N$ and $G/N$ are abelian and $GA=\C_{GA}(x)NB=\C_{GA}(x)B$ for every $x\in N$. Hence $G\le\C_{GA}(x)$ and $N\le\Z(G)$. Therefore, $G$ is nilpotent (of class at most $2$). 
Then every Sylow subgroup of $G$ is $A$-invariant and we may assume that $G$ is a $p$-group. In this case the claim follows from \cite[Theorem~1']{Robinsonnonabel}.

Hence, we may assume that $G$ is characteristically simple. If $G$ is non-abelian, then we easily get a contradiction by following the arguments in the proof of \autoref{kGV}.
\end{proof}

\begin{Lem}\label{lempipi}
Let $G$ be a $\pi$-separable group such that $\pcore_{\pi'}(G)=1$ and $k(G)=|G|_{\pi}$. Then $G=\pcore_{\pi\pi'}(G)$.
\end{Lem}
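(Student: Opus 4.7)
The plan is to argue by induction on $|G|$ and extract equality constraints from the proof of \autoref{cor}. The case $G=1$ is trivial, and since $\pcore_{\pi'}(G)=1$ and $G$ is $\pi$-separable, $N:=\pcore_\pi(G)\ne 1$. With $L:=\pcore_{\pi\pi'}(G)$, the chain
\[|G|_\pi=k(G)\le k(L)k(G/L)\le |N|\cdot|G/N|_\pi=|G|_\pi\]
from the proof of \autoref{cor} must consist of equalities, yielding three consequences: (a) $k(G)=k(L)k(G/L)$, hence by Gallagher's observation $G=\C_G(x)L$ for every $x\in L$; (b) $k(L)=|N|$, so writing $L=N\rtimes H$ via Schur--Zassenhaus (the kernel of $H\to\Aut(N)$ is the Hall $\pi'$-part of the characteristic subgroup $\C_L(N)$, hence normal in $G$ and contained in $\pcore_{\pi'}(G)=1$, so trivial), \autoref{lemnonabel} forces $N$ to be abelian; (c) $k(G/L)=|G/L|_\pi$.

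If $L=G$ we are done, so assume $L<G$. Since $\pcore_{\pi'}(G/L)=1$, induction via (c) gives $G/L=\pcore_{\pi\pi'}(G/L)$. Set $\bar P:=\pcore_\pi(G/L)$ and let $P$ be its preimage in $G$, so $L<P\unlhd G$. Intersecting (a) with $P$ yields $P=\C_P(x)L$ for $x\in L$, hence $k(P)=k(L)k(P/L)$. In $G/L=\bar P\rtimes\bar Q$ the complement $\bar Q$ acts faithfully on $\bar P$ (again since $\pcore_{\pi'}(G/L)=1$), and \autoref{lemnonabel} then gives $\bar P$ abelian, whence $k(P)=|N|\cdot|\bar P|=|P|_\pi$. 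Combined with $\pcore_{\pi'}(P)\le\pcore_{\pi'}(G)=1$ (as $P$ is characteristic in $G$), this shows $P$ satisfies the lemma's hypotheses. If $P<G$, induction gives $P=\pcore_{\pi\pi'}(P)=L$, contradicting $L<P$. Hence $P=G$, so $G/L=\bar P$ is an abelian $\pi$-group and $G/N=H\rtimes\bar P$, with $\bar P$ acting faithfully on $H=L/N$ (any nontrivial $C_{\bar P}(H)$ would pull back to a normal $\pi$-subgroup of $G$ strictly containing $N$).

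The main obstacle is to rule out this remaining case. For each $x\in H$, Gallagher's $G=\C_G(x)L$ together with a Hall $\pi$-subgroup $M_x$ of $\C_G(x)$ yields (by order considerations) that $M_xN/N$ is a complement to $H$ in $G/N$. Since $\bar P$ is abelian, Schur--Zassenhaus gives $M_xN/N=h_x\bar P h_x^{-1}$ for some $h_x\in H$. Because $M_x$ centralizes $x$ and $N$ acts trivially on $L/N$, this complement fixes the image of $x$ in $H=L/N$, and a short computation in the semidirect product $H\rtimes\bar P$ then places $h_x^{-1}xh_x$ in $C_H(\bar P)$. Consequently every element of $H$ is $H$-conjugate to an element of $C_H(\bar P)$, making $H$ the union of the $H$-conjugates of $C_H(\bar P)$. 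The classical fact that a finite group is never the union of conjugates of a proper subgroup then forces $C_H(\bar P)=H$, contradicting the faithful action of $\bar P$ on $H$. Hence $\bar P=1$ and $G=L$, completing the induction.
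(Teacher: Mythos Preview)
Your proof is correct, but it takes a considerably more circuitous route than the paper's. You proceed by induction on $|G|$, invoke \autoref{lemnonabel} twice to force $N$ and then $\bar P$ abelian, push through a chain of reductions until the residual configuration is $G/N=H\rtimes\bar P$ with $\bar P$ an abelian $\pi$-group acting faithfully on the $\pi'$-group $H$, and finally eliminate this case with the ``no finite group is the union of conjugates of a proper subgroup'' trick.

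The paper's argument is direct and uses neither induction nor \autoref{lemnonabel}. Writing $L:=\pcore_{\pi\pi'}(G)$ (the paper calls this $N$), the equality $k(G)=k(L)k(G/L)$ together with Gallagher's observation gives $G=\C_G(x)L$ for every $x\in L$, so each $\pi$-element $g\in G$ acts \emph{class-preservingly} on $L$ and hence on the $\pi'$-group $L/\pcore_\pi(G)$. A coprime class-preserving automorphism of a finite group is necessarily trivial (this is essentially the same conjugate-cover argument you deploy at the very end, but applied once and in full generality: each class contains a fixed point, so the centralizer meets every class). Thus $g$ centralizes $L/\pcore_\pi(G)$, and Hall--Higman's Lemma~1.2.3 forces $g\in L$. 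Consequently $G/L$ is a $\pi'$-group with $\pcore_{\pi'}(G/L)=1$, so $G=L$. The paper's approach isolates the single structural obstruction in one stroke; your approach, while longer, has the virtue of making the Schur--Zassenhaus splittings and the abelianness constraints fully explicit.
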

\begin{proof}
Let $N:=\pcore_{\pi\pi'}(G)$. Since $\pcore_{\pi'}(N)\le\pcore_{\pi'}(G)=1$, we have $k(N)\le|N|_{\pi}$ by \autoref{cor}. Moreover, $\pcore_{\pi'}(G/N)=1$, $k(G/N)\le|G/N|_{\pi}$ and $k(G)=k(N)k(G/N)$. In particular, $G=\C_G(x)N$ for every $x\in N$. Let $g\in G$ be a $\pi$-element. Then $g$ is a class-preserving automorphism of $N$ and also of $N/\pcore_{\pi}(G)$. Since $N/\pcore_{\pi}(G)=\pcore_{\pi'}(G/\pcore_{\pi}(G))$ is a $\pi'$-group, it follows that $g$ acts trivially on $N/\pcore_{\pi}(G)$. 
By the Hall--Higman Lemma 1.2.3, $N/\pcore_{\pi}(G)$ is self-centralizing and therefore $g\in N$. Thus, $G/N$ is a $\pi'$-group and $N=G$. 
\end{proof}

\begin{Thm}
Let $B$ be a $\pi$-block of a $\pi$-separable group with non-abelian defect group $D$. Then $k(B)<|D|$.
\end{Thm}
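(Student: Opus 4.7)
The plan is to mimic the Nagao-style reduction used in the proof of \autoref{main}, track equality in each inequality that appears, and then apply the sharpened statements \autoref{lempipi} and \autoref{lemnonabel} to force $D$ to be abelian, producing a contradiction.

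Concretely, I would suppose for contradiction that $k(B)=|D|$ and first reduce via the Fong--Reynolds Theorem for $\pi$-blocks (exactly as in the proof of \autoref{main}) to the case where a character $\lambda\in\Irr(\pcore_{\pi'}(G))$ lying under $B$ is $G$-invariant. Since this reduction preserves both $k(B)$ and $D$, the equality $k(B)=|D|$ persists. In this setting $B=\Irr(G\mid\lambda)$, $D$ is a Hall $\pi$-subgroup of $G$, and the chain
\[ k(B)\le k(G/N)\le|G/N|_\pi=|D| \]
from the proof of \autoref{main} (with $N:=\pcore_{\pi'}(G)$) must collapse to equality throughout; in particular $k(G/N)=|G/N|_\pi$.

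Because $\pcore_{\pi'}(G/N)=1$, \autoref{lempipi} then yields $G/N=\pcore_{\pi\pi'}(G/N)$. Setting $H/N:=\pcore_{\pi}(G/N)$, Schur--Zassenhaus produces a semidirect decomposition $G/N=(H/N)\rtimes A$ with $A$ a Hall $\pi'$-subgroup, and the Hall--Higman Lemma~1.2.3 (combined with $\pcore_{\pi'}(G/N)=1$) forces the action of $A$ on $H/N$ to be faithful. Retracing the proof of \autoref{cor} with equalities in place, the bound coming from \autoref{kGV} must be saturated, that is, $k((H/N)\rtimes A)=|H/N|$.

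Now \autoref{lemnonabel}, applied to the coprime pair $(H/N,A)$, gives that $H/N$ is abelian. Since $H=ND$ with $N\cap D=1$, we have $D\cong H/N$, so $D$ is abelian, contradicting the hypothesis. The only step that needs genuine care is checking that equality really propagates through every inequality in \autoref{main}---in particular through the character-theoretic bound $k(B)\le k(G/N)$ coming from Isaacs' Problem~11.10; once this is in hand, the result follows formally from the equality refinements supplied by the two earlier lemmas.
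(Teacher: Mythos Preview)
Your proposal is correct and follows essentially the same route as the paper: assume $k(B)=|D|$, use the Fong--Reynolds reduction from \autoref{main} to pass to $G/N$ with $\pcore_{\pi'}(G/N)=1$ and $k(G/N)=|G/N|_\pi$, invoke \autoref{lempipi} to get $G/N=\pcore_{\pi\pi'}(G/N)$ (so $D\cong H/N\unlhd G/N$), and then apply \autoref{lemnonabel} to the coprime semidirect product to conclude that $D$ is abelian. The only superfluous step is ``retracing the proof of \autoref{cor}'': once \autoref{lempipi} gives $G/N=(H/N)\rtimes A$, the equality $k((H/N)\rtimes A)=|H/N|$ is immediate from $k(G/N)=|G/N|_\pi$, so no separate saturation argument is needed.
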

\begin{proof}
We assume that $k(B)=|D|$. Following the proof of \autoref{main}, we end up with a $\pi$-separable group $G$ such that $D\le G$, $\pcore_{\pi'}(G)=1$ and $k(G)=|G|_{\pi}=|D|$. By \autoref{lempipi}, $D\unlhd G$ and by \autoref{lemnonabel}, $D$ is abelian.
\end{proof}

Similar arguments imply the following $\pi$-version of \cite[Theorem~3]{Robinsonnonabel} which also extends \autoref{cor}. 

\begin{Thm}
Let $G$ be a $\pi$-separable group such that $\pcore_{\pi'}(G)=1$ and $H\le G$. Then $k(H)\le|G|_{\pi}$ and equality can only hold if $|H|_{\pi}=|G|_{\pi}$.
\end{Thm}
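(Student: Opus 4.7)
The plan is to induct on $|G|$, mirroring the pattern of the previous results in this section. The base case $G=1$ is trivial, so assume $G\ne 1$ and set $N:=\pcore_\pi(G)$ and $M:=\pcore_{\pi\pi'}(G)$; since $G$ is $\pi$-separable with $\pcore_{\pi'}(G)=1$, $N$ is non-trivial. The main tool will be to play $H$ against $N$ and $M$ via the subgroups $HN$ and $H\cap M$, using \autoref{cor} and \autoref{kGV}.

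First I would consider the subgroup $HN\le G$. By the same Hall--Higman argument used in \autoref{lempipi}, any normal $\pi'$-subgroup of $HN$ centralizes the normal $\pi$-subgroup $N$ and hence lies in $\C_G(N)\le N$, so must be trivial; thus $\pcore_{\pi'}(HN)=1$. If $HN<G$, the induction hypothesis applied to the pair $(HN,H)$ yields $k(H)\le|HN|_\pi\le|G|_\pi$, and equality forces $|HN|_\pi=|G|_\pi$ together with (via the inductive equality case) $|H|_\pi=|HN|_\pi=|G|_\pi$. Otherwise $HN=G$, whence also $HM=G$ and $H/(H\cap M)\cong G/M$. A routine check shows $\pcore_{\pi'}(G/M)=1$ (any normal $\pi'$-subgroup of $G/M$ would contradict the maximality in the definition of $M$), so \autoref{cor} applied to $G/M$ yields $k(G/M)\le|G/M|_\pi=|G|_\pi/|N|$. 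If moreover $M<G$, the induction hypothesis applied to $(M,H\cap M)$ (noting $\pcore_{\pi'}(M)=1$ by Hall--Higman) gives $k(H\cap M)\le|M|_\pi=|N|$, so that
\[k(H)\le k(H\cap M)\cdot k(G/M)\le|N|\cdot|G|_\pi/|N|=|G|_\pi,\]
and equality propagates through the inductive equality cases (together with \autoref{lempipi}) to yield $|H|_\pi=|G|_\pi$.

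The main obstacle I anticipate is the residual case $M=G$, i.e., $G=N\rtimes L$ with $L$ a Hall $\pi'$-subgroup acting faithfully and coprimely on $N$, in which the recursive step for $M$ collapses. Here the bound $k(H)\le|N|$ must be established directly. After conjugating within $G$ one may assume $L\le H$, so $H=(H\cap N)\rtimes L$; the case $H=G$ is \autoref{kGV}, and for $H<G$ I would argue by a chief-series induction on $N$ viewed as an $L$-module, reducing via a suitable $L$-invariant subgroup $D\le H\cap N$ to a quotient $G/D=(N/D)\rtimes L$ on which $L$ (modulo its kernel on $N/D$) acts faithfully, and combining \autoref{kGV} on this quotient with the observation that $L$-conjugacy classes of $L$-elements do not fuse inside $N\rtimes L$ to conclude. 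This last step is in close analogy with Robinson's~\cite[Theorem~3]{Robinsonnonabel} for the $p$-solvable case, and is the place where the subtleties of coprime (possibly non-faithful) actions on subquotients of $N$ have to be handled carefully.
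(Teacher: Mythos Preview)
The paper does not actually give a proof of this theorem: it says only that ``similar arguments'' yield the result and leaves the details to the reader. So there is no paper argument to compare against; your proposal must stand on its own.

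Your reductions in Case~1 ($HN<G$) and Case~2a ($HN=G$, $M<G$) are correct and are precisely in the spirit of the earlier proofs in the paper. The problem is Case~2b, which you yourself flag as the obstacle. There the sketch genuinely breaks down. You propose to pick an $L$-invariant subgroup $D\le H\cap N=K$ coming from a chief series of $G$ and pass to $G/D$; but this requires $D\unlhd G$, and $K$ need not contain any nontrivial normal subgroup of $G$. When $N$ is abelian this is harmless (then $K$ itself is $G$-normal, forcing $K\in\{1,N\}$ by minimality and the case $K=1$ is exactly your non-fusion observation $k(L)\le k(N\rtimes L)\le|N|$). But when $N$ is a non-abelian minimal normal subgroup the reduction is empty: for example, if $N=S^3$ for a non-abelian simple $S$ with $3\nmid|S|$ (take $S=\Sz(8)$) and $L=C_3$ permutes the factors cyclically, then the diagonal $K=\{(s,s,s):s\in S\}$ is a proper nontrivial $L$-invariant subgroup containing no nontrivial $G$-normal subgroup, and your chief-series step has nothing to quotient by. The non-fusion remark only yields $k(L)\le|N|$, which is the special case $K=1$ and does not combine with the chief-series idea to handle general $K$.

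So Case~2b is a real gap, not just a detail to be filled in. One natural line (closer to what Robinson does in \cite[Theorem~3]{Robinsonnonabel}) is to observe that $\pcore_{\pi'}(H)=\C_L(K)=:C$, whence $k(H)\le k(C)\,k(H/C)\le k(C)\,|K|$ by \autoref{kGV} applied to $H/C=K\rtimes(L/C)$; the remaining task is then to prove $k(C)\le|N:K|$ from the fact that $C$ acts faithfully on $N$ and trivially on $K$. This is straightforward when $K\unlhd N$ (then $C$ acts faithfully on $N/K$ by coprimeness, and non-fusion plus \autoref{kGV} give $k(C)\le|N/K|$), but for non-normal $K$ it needs an additional argument, and that is precisely the missing idea in your sketch.
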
 

The proof is left to the reader. 

\section*{Acknowledgment}
This work is supported by the German Research Foundation (projects SA \mbox{2864/1-1} and SA \mbox{2864/3-1}).


\begin{thebibliography}{10}

\bibitem{Brauer46}
R. Brauer, \textit{On blocks of characters of groups of finite order. {II}},
  Proc. Nat. Acad. Sci. U.S.A. \textbf{32} (1946), 215--219.

\bibitem{Atlas}
J.~H. Conway, R.~T. Curtis, S.~P. Norton, R.~A. Parker and R.~A. Wilson,
  \textit{ATLAS of finite groups}, Oxford University Press, Eynsham, 1985.

\bibitem{DixonTour}
J.~D. Dixon, \textit{The maximum order of the group of a tournament}, Canad.
  Math. Bull. \textbf{10} (1967), 503--505.

\bibitem{Gallagher}
P.~X. Gallagher, \textit{The number of conjugacy classes in a finite group},
  Math. Z. \textbf{118} (1970), 175--179.

\bibitem{RobRob}
R.~M. Guralnick and G.~R. Robinson, \textit{On the commuting probability in
  finite groups}, J. Algebra \textbf{300} (2006), 509--528.

\bibitem{INM}
M.~J. Iranzo, G. Navarro and F.~P. Monasor, \textit{A conjecture on the number
  of conjugacy classes in a {$p$}-solvable group}, Israel J. Math. \textbf{93}
  (1996), 185--188.

\bibitem{Isaacs}
I.~M. Isaacs, \textit{Character theory of finite groups}, AMS Chelsea
  Publishing, Providence, RI, 2006.

\bibitem{Laradji}
A. Laradji, \textit{Relative {$\pi$}-blocks of {$\pi$}-separable groups}, J.
  Algebra \textbf{220} (1999), 449--465.

\bibitem{Laradji2}
A. Laradji, \textit{Relative {$\pi$}-blocks of {$\pi$}-separable groups. {II}},
  J. Algebra \textbf{237} (2001), 521--532.

\bibitem{Yanjun}
Y. Liu, \textit{{$\pi$}-forms of {B}rauer's {$k(B)$}-conjecture and {O}lsson's
  conjecture}, Algebr. Represent. Theory \textbf{14} (2011), 213--215.

\bibitem{ManzStaszewski}
O. Manz and R. Staszewski, \textit{Some applications of a fundamental theorem
  by {G}luck and {W}olf in the character theory of finite groups}, Math. Z.
  \textbf{192} (1986), 383--389.

\bibitem{NagaoKGV}
H. Nagao, \textit{On a conjecture of {B}rauer for {$p$}-solvable groups}, J.
  Math. Osaka City Univ. \textbf{13} (1962), 35--38.

\bibitem{PP}
P.~P. P\'alfy and L. Pyber, \textit{Small groups of automorphisms}, Bull.
  London Math. Soc. \textbf{30} (1998), 386--390.

\bibitem{Robinsonnonabel}
G.~R. Robinson, \textit{On {B}rauer's {$k(B)$}-problem for blocks of
  {$p$}-solvable groups with non-{A}belian defect groups}, J. Algebra
  \textbf{280} (2004), 738--742.

\bibitem{kGVproblem}
P. Schmid, \textit{The solution of the {$k(GV)$} problem}, ICP Advanced Texts
  in Mathematics, Vol. 4, Imperial College Press, London, 2007.

\bibitem{Slattery}
M.~C. Slattery, \textit{Pi-blocks of pi-separable groups. {I}}, J. Algebra
  \textbf{102} (1986), 60--77.

\bibitem{Slattery2}
M.~C. Slattery, \textit{Pi-blocks of pi-separable groups. {II}}, J. Algebra
  \textbf{124} (1989), 236--269.

\bibitem{Slattery3}
M.~C. Slattery, \textit{Pi-blocks of pi-separable groups. {III}}, J. Algebra
  \textbf{158} (1993), 268--278.

\bibitem{Wolf}
T.~R. Wolf, \textit{Variations on {M}c{K}ay's character degree conjecture}, J.
  Algebra \textbf{135} (1990), 123--138.

\bibitem{Yixin2}
Y. Zhu, \textit{On {$\pi$}-block induction in a {$\pi$}-separable group}, J.
  Algebra \textbf{235} (2001), 261--266.

\end{thebibliography}
\end{document}